\newcommand{\R}{\mathbb R}
 \newcommand{\N}{\mathbb N}
  \newcommand{\E}{\mathbb E}
\newcommand{\PP}{\mathbb P}
\newcommand{\calF} {\ensuremath {\mathcal{F}}}
\newtheorem{theorem}{Theorem}[section]
 \newtheorem{remark}[theorem]{Remark}
\newtheorem{lemma}[theorem]{Lemma}
\newtheorem{definition}[theorem]{Definition}
\newtheorem{hypothesis}[theorem]{Hypothesis}
\begin{document}

\title{  Well-posedness  of  the non-local conservation law  by stochastic
 perturbation. }

\author{ Christian Olivera\footnote{Departamento de Matem\'{a}tica, Universidade Estadual de Campinas, Brazil.
E-mail:  {\sl  colivera@ime.unicamp.br}.
}}

\date{}

\maketitle

\textit{Key words and phrases.
Stochastic partial differential equation, Conservation Law,  Regularization by noise, It\^o-Wentzell-Kunita formula, Irregular flux.}

\vspace{0.3cm} \noindent {\bf MSC2010 subject classification:} 60H15, %SPDE
 35R60, %SPDE
 35F10, %Initial conditions for 1-order lin DE
 60H30. %applications of Stoch. An.

%\vspace{0.3cm} \noindent {\bf MSC2000 subject classification:} 60H10
%, 35F15,  60H25, 60H15  .

%
%%%%%%%%%%%%%%%%%%%%%%%%%%%%%%%%%%%%%%%%%%%%%%%%%%%%%%%%%%%%%%%%%%%%%%%%%%%%
\begin{abstract}
 Stochastic  non-local conservation law  equation  in the presence of  discontinuous flux functions
is considered in an $L^{1}\cap L^{2}$ setting. The flux function is assumed bounded and integrable
(spatial variable). Our result is to  prove existence and uniqueness of weak solutions. The solution is strong solution in the probabilistic sense. The proofs are constructive  and based on  the method of characteristics (in the presence  of noise), 
It\^o-Wentzell-Kunita formula and commutators. Our results  are new  , to the best of our knowledge, and are  the first nonlinear extension of the seminar paper  \cite{FGP2} where the linear case  was addressed. 
 \end{abstract}

%%%%%%%%%%%%%%%%%%%%%%%%%%%%%%%%%%%%%%%%%%%%%%%%%%%%%%%%%%%%%%%%%%%%%%%%%%%%
%
\maketitle

%\begin{center}
%{\large
%Christian Olivera\footnote{Research  supported  FAEPEX 1324/12, FAPESP 2012/18739-0, 2012/18780-0  . }}\\
%
%\textit{Departamento de
% Matem\'{a}tica, Universidade Estadual de Campinas, \\
% Campinas - SP, Brasil}
%\\  e-mail:  colivera@imeunicamp.br}
%\end{center}

%%%%%%%%%%%%%%%%%%%%%%%%%%%%%%%%%%%%%%%%%%%%%%%%%%%%
\section {Introduction} \label{Intro}

We consider  the conservation law

\begin{equation}\label{con1}
 \left \{
\begin{aligned}
    &\partial_t u(t, x) + Div (F(t,x,u))= 0 \, ,
    \\[5pt]
    &u|_{t=0}=  u_{0} \, .
\end{aligned}
\right .
\end{equation}

Here u is called the conserved quantity while F is the flux. These  type of the equations  express the balance equations of continuum physics, when small
dissipation effects are neglected. A basic example is provided by the equations of
 gas and fluid  dynamics, traffic flow and sedimentation of solid particles in a liquid. The  well-posedness theorems within the class of entropy solutions were
established by Kruzkov, see \cite{Krus}. The selection of the physically relevant solution is based on 
the so called entropy condition that assert that a shock is formed only when the characteristics carry information toward
the shock.

  In  1995 was introduced by Lions, Perthame and 
Tadmor \cite{LPT}  the notions of called kinetic solution and relies on a new equation, the so-called kinetic formulation, that
is derived from the conservation law at hand and that (unlike the original problem)
possesses a very important feature - linearity. The two notions of solution,
i.e. entropy and kinetic, are equivalent whenever both of them exist, nevertheless,
kinetic solutions are more general as they are well defined even in situations when
neither the original conservation law or the corresponding entropy inequalities can
be understood in the sense of distributions. Among other significant references in this direction, let us emphasize the works
of Chen and Perthame \cite{chen}, Perthame \cite{Perth} and  Lions,  Benoit and  Souganidis \cite{Lions4}.

Recently there has been an interest in studying the effect of stochastic forcing on
nonlinear conservation laws  \cite{chen2,Debu,Feng,Hof}. These papers  consider the following stochastic scalar conservation laws 

\[
    du(t, x) + Div (F(u)) dt =  g(u) dW(t,x)  \, ,
\]

 with particular emphasis
on existence and uniqueness questions (well-posedness).

For other hand, in \cite{lions} and \cite{lions2}    Lions, Benoit and Souganidis, 
 introduced the theory of  pathwise  solutions to study the following stochastic conservation law  

\[
    du(t, x) + Div (F(t,x,u)) {\circ}{dz_t} =0  \, ,
\]

where $z_t$ is continuous noise. They defined  a new concept of the solution and proved 
existence and uniqueness via  kinetic formulation. When the  noise $z_t$ is the 
standard Brownian motion the  pathwise  solutions corresponding formally on the Stratonovich interpretation of the equation. See also
Gess and Souganidis in \cite{Gess2}.

The purpose of the present paper is a
contribution to the following general question: can
one hope for uniqueness of  weak solutions  of  some class  of conservation laws with irregular flux
 under stochastic perturbation. We present  the first  positive result. More precisely, we  study the following stochastic conservation law

\begin{equation}\label{SCL}
 \left \{
\begin{aligned}
    &   \partial_t u(t, x) + Div  \big( ( F(t,x,(K\ast u)(x))  + \frac{d B_{t}}{dt}) \cdot  u(t, x)  \big )= 0, 
  \, ,
    \\[5pt]
    &u|_{t=0}=  u_{0} \, .
\end{aligned}
\right .
\end{equation}
Here, $(t,x) \in [0,T] \times \R$, $\omega \in \Omega$ is an element of the probability space $(\Omega, \PP, \calF)$, 
$F:[0,T] \times \R\times \R  \to \R$ , $B_{t}$ is a standard Brownian motion and $K$ is a regular kernel. 
The stochastic integration is to be understood in the Stratonovich sense. The Stratonovich form is the natural one for several reasons
, including physical intuition related to the Wong-Zakai principle.

The novelty  of our results is to  show existence and uniqueness of 
 weak solutions for one-dimensional stochastic nonlocal  conservation law  (\ref{SCL})  when  $F$ has low regularity in the spatial variable. 
Conservation laws with non-local fluxes have appeared recently in the literature, arising naturally
in many fields of application, such as in crowd dynamics (see \cite{Colom} and the
references therein), or in models inspired from biology( see \cite{Carr,Fri}).

In recent years there has been an increasing interest in random influences on PDE. 
The questions of regularizing effects and well-posedness by noise for 
partial differential equations have attracted much interest in recent years. 
The literature on regularization (i.e. improvement on  uniqueness)  by noise is vast and giving a complete
survey at this point would exceed the purpose of this paper. Concerning the case of
transport/continuity  equations with irregular drift, we mention the works \cite{AttFl11,Beck,Fre1,Fre2,FGP2,MNP14,Moli,NO,Ol}.
In particular, we would like to emphasize the papers \cite{Moli} and \cite{Ol}
 since the proofs have  served as an inspiration for some  steps of this work. 

 The proofs are  based in the method of characteristics, some estimation on the flow associated to characteristics,   
in commutators and  stochastic calculus techniques.

\medskip

\subsection{Hypothesis.}

 We denoted $F_1=\partial_{1} F$, $F_3=\partial_{3} F$ and $F_{3,3}=\partial_{3}^{2} F$ In this paper we assume the following hypothesis:

\begin{hypothesis}\label{hyp1}
The flux $F$ satisfies
\begin{equation}\label{cond3-1}
  F\in L^{\infty}([0,T], L^{1}(\R, L^{\infty}(\R))),
\end{equation}

\begin{equation}\label{cond3-3}
  F\in L^{\infty}( [0,T]\times \R  \times  \R),
\end{equation}

\begin{equation}\label{cond3-2}
   F_1\in L^{\infty}([0,T], L^{1}(\R, L^{\infty}(\R))), 
\end{equation}

\begin{equation}\label{cond3-4}
  F_3\in L^{\infty}( [0,T]\times \R  \times  \R),
\end{equation}

\begin{equation}\label{cond3-5}
   F_{3,3}\in L^{2}([0,T], L^{1}(\R, L^{\infty}(\R))), 
\end{equation}

Moreover, the initial condition verifies 
\begin{equation}\label{weight}
  u_0 \in  L^2(\R)\cap L^1(\R) ,
\end{equation}

and the kernel satisfies 

\begin{equation}
  K \in  C_{0}^{\infty}(\R). 
\end{equation}

\end{hypothesis}

\subsection{Notations}First, through of this paper, we fix a stochastic basis with a
one-dimensional Brownian motion $\big( \Omega, \mathcal{F}, \{
\mathcal{F}_t: t \in [0,T] \}, \mathbb{P}, (B_{t}) \big)$. Then, we recall to help the intuition, the following definitions 

$$
\begin{aligned}
\text{Itô:}&  \ \int_{0}^{t} X_s dB_s=
\lim_{n    \rightarrow \infty}   \sum_{t_i\in \pi_n, t_i\leq t}  X_{t_i}    (B_{t_{i+1} \wedge t} - B_{t_i}),
\\[5pt]
\text{Stratonovich:}&  \ \int_{0}^{t} X_s  \circ dB_s=
\lim_{n    \rightarrow \infty}   \sum_{t_i\in \pi_n, t_i\leq t} \frac{ (X_{t_{i+1} \wedge t   } + X_{t_i} ) }{2} (B_{t_{i+1} \wedge t} - B_{t_i}),
\\[5pt]
\text{Covariation:}& \ [X, Y ]_t =
\lim_{n    \rightarrow \infty}   \sum_{t_i\in \pi_n, t_i\leq t} (X_{t_{i+1}\wedge t   } - X_{t_i} )  (Y_{t_{i+1} \wedge t} - Y_{t_i}),
\end{aligned}
$$
where $\pi_n$ is a sequence of finite partitions of $ [0, T ]$ with size $ |\pi_n| \rightarrow 0$ and
elements $0 = t_0 < t_1 < \ldots  $. The limits are in probability, uniformly in time
on compact intervals. Details about these facts can be found in Kunita 
\cite{Ku2}. Also we address from that book, 
Itô's formula, the chain rule for the stochastic integral, 
for any continuous d-dimensional semimartingale $X = (X_1,X_2,\ldots,X_d)$, 
and twice continuously differentiable and real valued function f on   $\mathbb{R}^{d}$.

\subsection{Stochastic flow.}

We consider  $0\leq s\leq t$ and $x\in\mathbb{R}^{d}$, consider the following
stochastic differential equation in $\mathbb{R}^{d}$

\begin{equation}
\label{11}X_{s,t}(x)= x + \int_{s}^{t} b(r, X_{s,r}(x)) \ dr + B_{t}-B_{s}.
\end{equation}
and denote by $X_{t}(x): = X_{0,t}(x), t\in [0,T], x\in \mathbb{R} ^{d}$.

For $m \in \N$ and $0< \alpha < 1$, let us assume the following hypothesis on $b$:
\begin{equation}
\label{REGULCLASS}
    b\in L^{1}((0,T); C_{b}^{m,\alpha}(\mathbb{R}^{d}))
\end{equation}
where $C^{m,\alpha}(\mathbb{R}^{d})$ denotes the class of functions of class $C ^{m}$  on $\mathbb{R}^{d}$ such that the last derivative is H\"older continuous of order $\alpha$.

\noindent It is well known that under condition (\ref{REGULCLASS}), $X_{s,t}(x)$ is a
stochastic flow of $C^{m}$-diffeomorphism (see for example \cite{Chow} and
\cite{Ku2}). Moreover, the inverse flow
 $$Y_{s,t}(x):=X_{s,t}^{-1}(x)$$
 satisfies the
following backward stochastic differential equation%

\begin{equation}
\label{back}Y_{s,t}(x)= x - \int_{s}^{t} b(r, Y_{r,t}(x)) \ dr - (B_{t}-B_{s}).
\end{equation}
for  every $0\leq s\leq t\leq T$. We will denote $Y_{0,t}(x):= Y_{t}(x)$ for every $t\in [0, T], x\in \mathbb{R}^{d}$.

%%%%%%%%%%%%%%%%%%%%%%%%%%%%%%%%%%%%%%%%%%%%%%%%%%%%

\section{Definition and Existence  of Solutions.}

\subsection{Definition of solutions}
\begin{definition}\label{defisoluH}
A stochastic process 
$u\in   L^\infty([0,T],L^{2}( \Omega\times \R)) \cap L^{1}(\Omega\times [0,T]\times \R)\cap L^{\infty}(\Omega\times [0,T], L^{1}(\R))$  is called a  $L^{2}$- weak solution of the Cauchy problem (\ref{SCL}) when: For any $\varphi \in C_0^{\infty}(\R)$, the real valued process $\int  u(t,x)\varphi(x)  dx$ has a continuous modification which is an $\mathcal{F}_{t}$-semimartingale, and for all $t \in [0,T]$, we have $\mathbb{P}$-almost surely
\begin{equation} \label{DISTINTSTR}
\begin{aligned}
    \int_{\R} u(t,x) \varphi(x) dx = &\int_{\R} u_{0}(x) \varphi(x) \ dx 
		\\[5pt]
   	& + \int_{0}^{t} \!\! \int_{\R}   u(s,x)   \, F(s,x,(K\ast u)) \partial_x \varphi(x)  dx ds
		\\[5pt]
   	& 
+ \int_{0}^{t} \!\! \int_{\R}   u(s,x) \ \partial_x \varphi(x) \ dx \, {\circ}{dB_s} \, .
\end{aligned}
\end{equation}
\end{definition}

\begin{remark}\label{lemmaito}
Using the same idea as in Lemma 13 \cite{FGP2}, one can write the problem (\ref{SCL}) in It\^o form as follows, a  stochastic process $u\in   L^\infty([0,T],L^{2}( \Omega\times \R))\cap L^{1}(\Omega\times [0,T]\times \R) \cap L^{\infty}(\Omega\times [0,T], L^{1}(\R))$ is  a $ L^{2}$- weak solution  of the SPDE (\ref{SCL}) iff for every test function $\varphi \in C_{0}^{\infty}(\mathbb{R})$, the process $\int u(t, x)\varphi(x) dx$ has a continuous modification which is a $\mathcal{F}_{t}$-semimartingale and satisfies the following It\^o's formulation

\[
    \int_{\R} u(t,x) \varphi(x) dx = \int_{\R} u_{0}(x) \varphi(x) \ dx	
   	+ \int_{0}^{t} \!\! \int_{\R}   u(s,x)   \, F(s,x,(K\ast u)) \partial_x \varphi(x) \ dx ds
\]
\[
 + \int_{0}^{t} \!\! \int_{\R}   u(s,x) \ \partial_x \varphi(x) \ dx \, dB_s \,  + 
\frac{1}{2} \int_{0}^{t} \!\! \int_{\R}   u(s,x) \ \partial_x^{2} \varphi(x) \ dx \, ds.
\]

\end{remark}

\subsection{Existence.}

The goal of this section is to prove general existence result for
 stochastic conservation law with low regularity of the flux function.

\begin{lemma}
Assume that hypothesis \ref{hyp1} holds. Then there exists $L^2$-weak solutions of the Cauchy problem (\ref{SCL}).
\end{lemma}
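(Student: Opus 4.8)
The plan is to construct the solution by the stochastic method of characteristics, combined with a mollification of the flux and a fixed-point argument to close the nonlocal coupling. Since $K \in C_0^\infty(\R)$, the map $u \mapsto K \ast u$ sends any $u \in L^1(\R)$ into a smooth bounded function, and this is the structural fact that makes the problem tractable despite the low spatial regularity of $F$. First I would regularize the flux: take a standard mollifier $\rho_\varepsilon$ and set $F^\varepsilon = F \ast \rho_\varepsilon$ in the spatial variable, so that $F^\varepsilon$ is smooth in $x$ and inherits, uniformly in $\varepsilon$, all the bounds of Hypothesis \ref{hyp1}.

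For fixed $\varepsilon$, I would solve the regularized problem by a Picard-type iteration on the nonlocal term. Given a candidate $w$, freeze the drift $b^w(t,x) := F^\varepsilon(t,x,(K \ast w)(x))$; because $K \ast w$ is smooth and bounded and $F^\varepsilon, F_3$ are bounded, $b^w$ satisfies the regularity condition (\ref{REGULCLASS}), hence the SDE (\ref{11}) generates a stochastic flow of diffeomorphisms $X^w_{s,t}$ with inverse $Y^w_{s,t}$ solving (\ref{back}). I would then define the next iterate as the pushforward of the datum along the flow, $u(t,x) := u_0(Y^w_t(x))\,\partial_x Y^w_t(x)$, and verify through the It\^o--Wentzell--Kunita formula that $u$ satisfies the weak It\^o formulation of Remark \ref{lemmaito}, the $\tfrac12\partial_x^2$ correction term arising naturally from the additive-noise flow. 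Showing that the map $w \mapsto u$ is a contraction on $C([0,T'];L^2(\Omega\times\R))$ for $T'$ small --- via stability of the flow with respect to the drift and the Lipschitz estimate $\|b^{w_1}-b^{w_2}\|_{L^1} \lesssim \|F_3\|_{L^\infty}\,\|K\|_{L^\infty}\,\|w_1-w_2\|_{L^1}$ --- yields a unique fixed point, which extends to all of $[0,T]$ by iterating over time intervals.

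The a priori estimates follow from the pushforward structure: the $L^1$ norm is conserved (conservation of mass), and an $L^2$ bound follows from the It\^o formulation applied to $u^2$ together with Gronwall, controlling the spatial divergence of the drift through $F_1 \in L^\infty([0,T],L^1(\R,L^\infty(\R)))$. These bounds are uniform in $\varepsilon$, placing $(u^\varepsilon)$ in a bounded set of $L^\infty([0,T],L^2(\Omega\times\R)) \cap L^\infty(\Omega\times[0,T],L^1(\R))$. I would then pass to the limit $\varepsilon \to 0$ by extracting a weakly-$\ast$ convergent subsequence and, crucially, upgrading to strong (a.e.) convergence of the nonlocal term $K \ast u^\varepsilon$, which is what allows identification of the limit in the nonlinear flux $F(s,x,K\ast u^\varepsilon)$. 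This strong convergence is produced by the smoothing action of $K$, which turns the uniform $L^2$ bound into local equicontinuity and hence compactness of $(K\ast u^\varepsilon)$ by an Arzel\`a--Ascoli or Aubin--Lions type argument.

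The main obstacle I expect is precisely this limit passage in the nonlinear, nonlocal flux: since $F$ is merely integrable in space, the flow representation cannot be controlled uniformly as $\varepsilon \to 0$, so the argument must rest entirely on the compactifying effect of convolution with $K$ to obtain strong convergence of $K\ast u^\varepsilon$, together with the uniform a priori bounds, rather than on any regularity of $F$ itself.
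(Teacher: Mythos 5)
Your overall architecture (mollify $F$, solve the regularized nonlocal problem by characteristics plus a fixed point, derive uniform bounds, pass to the limit using strong convergence of $K\ast u^\varepsilon$) is close in spirit to the paper, which instead runs a Picard-type iteration $u^{n}\mapsto u^{n+1}$ directly coupled with the regularization and extracts weak limits. But there is one genuine gap, and it sits exactly at the point you dismiss. Your uniform $L^2$ bound is supposed to come from It\^o's formula applied to $u^2$ plus Gronwall, ``controlling the spatial divergence of the drift through $F_1\in L^\infty([0,T],L^1(\R,L^\infty(\R)))$.'' This does not close: the energy identity produces the term $-\int_{\R}\partial_x b^\varepsilon(t,x)\,u^2(t,x)\,dx$ with $\partial_x b^\varepsilon = F_1^\varepsilon(t,x,K\ast u)+F_3^\varepsilon(t,x,K\ast u)\,\partial_x(K\ast u)$, and Hypothesis \ref{hyp1} only gives $F_1$ in $L^1$ in the space variable (with values in $L^\infty$ in the third variable), not in $L^\infty$ in space. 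An $L^1_x$ bound on $\partial_x b^\varepsilon$ cannot be paired with $u^2\in L^1_x$ in Gronwall; after mollification $\|F_1\ast_x\rho_\varepsilon\|_{L^\infty_x}$ is only of order $\varepsilon^{-1}$, so the Gronwall constant blows up as $\varepsilon\to0$ and you lose precisely the uniform bound on which the whole compactness argument rests.

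Moreover you assert that ``the flow representation cannot be controlled uniformly as $\varepsilon\to 0$,'' which is exactly backwards: the uniform control of the flow is the central regularization-by-noise mechanism of the paper. The $L^2$ bound is obtained from the pushforward formula $u^\varepsilon(t,x)=u_0(Y_t^\varepsilon(x))\,JY_t^\varepsilon(x)$ by a change of variables, which reduces it to the moment estimate $\E\big[|\partial_x X_{s,t}^\varepsilon(x)|^{-1}\big]\le C$ uniformly in the regularization parameter (Lemma \ref{lemma pe2}). That estimate is proved by applying the It\^o--Wentzell--Kunita formula to $\int_{-\infty}^{X_t}F(t,x,(K\ast u)(x))\,dx$ so as to rewrite $\int_0^t\partial_x b(s,X_s)\,ds$ in terms of quantities controlled only by the norms appearing in Hypothesis \ref{hyp1}, together with exponential-martingale bounds; the additive noise is essential and there is no deterministic analogue. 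Without this ingredient your scheme has no uniform $L^2$ bound and the limit passage cannot start. The remainder of your plan (the contraction for fixed $\varepsilon$, and the compactness of $K\ast u^\varepsilon$ obtained from the uniform $L^1$ bound and the smoothness of $K$) is reasonable, and arguably cleaner than the paper's successive linearizations, but it is the easy part of the argument.
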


\begin{proof}

{\it Step 1: Regularization.}

 Let $\{\rho_n\}_n$ be a family of standard symmetric mollifiers, We  define the family of  regularized coefficients given by

$$
F^{n}(t,.,z) =   (F\ast_x \rho_n )(t,.,z)  
$$

and

$$
u_0^n =   u_0 \ast \rho_n    \,.
$$

Clearly we observe that, for every  $n\in   \mathbb{N}$, any element $F^{n}$, $u_0^n$ are smooth  with bounded derivatives of all orders. We define $u^{1}$ as the solution of the following SPDE

\begin{equation}\label{STE-reg1}
 \left \{
\begin{aligned}
    &d u^1 (t, x) +  Div\bigg(  u^1 (t, x)  \cdot \big(  F^{1}(t,x,(u_{0}^{1}\ast K))  dt +
 \circ d B_{t} \big) \bigg)= 0\, ,
    \\[5pt]
    &u^\varepsilon \big|_{t=0}=  u_{0}^1.
\end{aligned}
\right .
\end{equation}

Moreover, inductively we define

\begin{equation}\label{STE-reg}
 \left \{
\begin{aligned}
    &d u^{n+1}(t, x) +  Div\bigg(  u^{n+1} (t, x)  \cdot \big(  F^{n+1}(t,x,(u^{n}\ast K))  dt +
 \circ d B_{t} \big) \bigg)= 0\, ,
    \\[5pt]
    &u^{n+1} \big|_{t=0}=  u_{0}^{n+1}.
\end{aligned}
\right .
\end{equation}

Following the classical theory of H. Kunita \cite{Ku3} we obtain that

\[
u^{n+1}(t,x) =  u_{0}^{n+1} (Y_t^{n+1}(x))  JY_t^{n+1}(x)
\]

is the unique solution to the regularized equation \eqref{STE-reg}, where $Y_t^{n+1}$ 
is the inverse  to the following stochastic differential equation (SDE):
\begin{equation*}
d X_t^{n+11} = F^{n+1} (t,X_t,(u^{n}\ast K)(t,X_t) ) \, dt + d B_t \, ,  \hspace{1cm}   X_0 = x \,.
\end{equation*}

\bigskip

{\it Step 2: Boundedness.}  We observe that

\begin{equation}\label{l1}
\int_{\R} |u^{n+1}(t,x)| \,  dx  =
\int_{\R}  |u_{0}^{n+1} (Y_t^{n+1}(x))|  JY_t^{n+1}(x) \,  dx  =
 \int_{\R} |u_{0}^{n+1} (y)|  dx\leq C.
\end{equation}

We postpone the following estimation for  the appendix
 
\begin{equation}\label{ja}
\mathbb{E}\bigg[\bigg|\partial_x X_{s,t}^{n}(x)\bigg|^{-1}\bigg] \leq C, 
\end{equation}

where the constant does  not depend on $n$. 

Then  making the change of variables $y=Y_t^{\varepsilon}(x)$ we have that
\begin{align*}
\int_{\R} \E[|u^{n+1}(t,x)|^2]\,  dx & =    \E \int_{\R} |u_{0}^{n+1} (y)|^2  (JX_t^{n+1}(y))^{-1}  dy.
\end{align*}

Now, by inequality (\ref{ja}) we conclude 
\[
\int_{\R} \E[|u^{n+1}(t,x)|^2]\,  dx =
  \int_{\R} |u_{0}^{n+1} (y)|^2  \E  (JX_t^{n+1}(x))^{-1}  dx
	\]
\begin{align}\label{eq0}	
 = \int_{\R} |u_{0}^{n+1} (y)|^2  \E |\partial_x X_{t}^{n+1}(x)|^{-1}   dx \leq C \int_{\R} |u_{0}^{n+1} (y)|^2  dx.
\end{align}

Therefore, the sequence $\{u^{n}\}$ is bounded in $u\in L^2(\Omega\times [0,T]\times \R )\cap L^\infty([0,T],L^{2}( \Omega\times \R))$ . Then  there exists a convergent subsequence, which we denote also by $u^{n}$, such that converge weakly in $L^2(\Omega\times [0,T]\times \R)$ and weak-star in $L^\infty([0,T],L^{2}( \Omega\times \R))$ to some process $u\in L^2(\Omega\times [0,T]\times \R)\cap L^\infty([0,T],L^{2}( \Omega\times \R))$. Since this subsequence is bounded in $L^{1}(\Omega\times [0,T]\times \R)$ we follows that $u^{n}$ converge to one measure $\mu$ and $\mu=u$. From estimation   (\ref{l1}) we deduce that $u\in L^{\infty}(\Omega\times [0,T], L^{1}(\R))$. 

\bigskip
{\it Step 3: Passing to the Limit.}
Now, if $u^{n+1}$ is a solution of \eqref{STE-reg}, it is also a weak solution, that is, for any test function $\varphi\in C_0^{\infty}(\R)$, $u^{n+1}$ verifies  (written in the Itô form):
\begin{align}\label{pass}
\int_{\R} u^{n+1}(t,x) \varphi(x) dx = &\int_{\R} u^{n+1}_{0}(x) \varphi(x) \ dx 
\nonumber\\[5pt] & +
\int_{0}^{t} \!\! \int_{\R}   u^{n+1}(s,x)   \, F^{n+1}(s,x,(u^{n}\ast K)) \partial_x \varphi(x) \ dx ds \nonumber\\[5pt]
& + \int_{0}^{t} \!\! \int_{\R}   u^{n+1}(s,x) \ \partial_x \varphi(x) \ dx \, dB_s \,  + \frac{1}{2} \int_{0}^{t} \!\! \int_{\R}   u^{n+1}(s,x) \ \partial_x^{2} \varphi(x) \ dx \, ds\,.
\end{align}

Now, we observe that $G^{n+1}=u^{n+1}(s,x)   \, F^{n+1}(s,x,(u^{n}\ast K)) $ is uniformly   bounded in 
$L^2(\Omega\times [0,T]\times \R)$. Then  there exists a convergent subsequence, which we denote also by $G^{n}$, such that converge weakly in $L^2(\Omega\times [0,T]\times \R)$  to some process $G\in L^2(\Omega\times [0,T]\times \R)$.

Then  passing  to the limit in equation  (\ref{pass}) along the convergent subsequences found, we have 

\[
\begin{aligned}
    \int_{\R} u(t,x) \varphi(x) dx = &\int_{\R} u_{0}(x) \varphi(x) \ dx
	+ \int_{0}^{t} \!\! \int_{\R}   G(s,x,\omega) \partial_x \varphi(x) \ dx ds
\\[5pt]
    & + \int_{0}^{t} \!\! \int_{\R}   u(s,x) \ \partial_x \varphi(x) \ dx \, dB_s \,  + \frac{1}{2} \int_{0}^{t} \!\! \int_{\R}   u(s,x) \ \partial_x^{2} \varphi(x) \ dx \, ds.
\end{aligned}
\]

 From the last equality we have that $\int_{\R} u(t,x) \varphi(x) dx$ is continuous semimartingale   
 for any test function $\varphi\in C_0^{\infty}(\R)$. Thus  $u\ast K$ is
continuous semimartingale. We observe that  $u^{n+1}\ast K$ converge to $u\ast K$
and  that $ F^{n+1}(s,x,(u^{n}\ast K)) $ strong converge to $F(s,x,(u\ast K)) $ in $L^{2}([0,T]\times \Omega, L_{loc}^{2}(\R))$. Then  passing  to the limit in equation  (\ref{pass}) along the convergent subsequences found, we conclude that 

$
\begin{aligned}
    \int_{\R} u(t,x) \varphi(x) dx = &\int_{\R} u_{0}(x) \varphi(x) \ dx
	+ \int_{0}^{t} \!\! \int_{\R}  u(s,x) F(s,x,(u\ast K)) \partial_x \varphi(x) \ dx ds
\\[5pt]
    & + \int_{0}^{t} \!\! \int_{\R}   u(s,x) \ \partial_x \varphi(x) \ dx \, dB_s \,  + \frac{1}{2} \int_{0}^{t} \!\! \int_{\R}   u(s,x) \ \partial_x^{2} \varphi(x) \ dx \, ds.
\end{aligned}
$
\end{proof}

\section{Uniqueness.}

\subsection{Estimation on the flow.}

Assume  that $u$ is a $L^{2}$-solution of the Cauchy problem (\ref{SCL}), .
Let $\{\rho_\varepsilon\}_\varepsilon$ be a family of standard symmetric mollifiers.
We denoted  $F_{\varepsilon}(t,.,(u\ast K))= (F\ast_{x} \rho_\varepsilon) (t,.,(u\ast K))$. We observe
 that the regularization is done in the second variable. Following the same steps as in the Lemma \ref{lemma pe2}
we obtain:

\begin{lemma}\label{lemma pe}
Assume the hypothesis \ref{hyp1}.  Then for  $T>0$  there exist a  constant 
$C$  such that
\begin{align}\label{eq0}
\mathbb{E}\bigg[\bigg|\partial_x X_{s,t}^{\varepsilon}(x)\bigg|^{-1}\bigg] \leq C,
\end{align}

where C depend  on $\| F\|_{L^{\infty}([0,T], L^{1}(\R, L^{\infty}(\R)))}$, $\| F\|_{L^{\infty}( [0,T]\times \R  \times  \R)}$, 
$\| F_1\|_{L^{\infty}([0,T], L^{1}(\R, L^{\infty}(\R)))}$, $\| F_3\|_{L^{\infty}( [0,T]\times \R  \times  \R)}$ and 
$\| F_{3,3}\|_{L^{2}([0,T], L^{1}(\R, L^{\infty}(\R)))}$.
\end{lemma}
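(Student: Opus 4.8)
The plan is to exploit that the Brownian part of \eqref{11} is additive, so that the first variation $J^{\varepsilon}_{s,t}(x):=\partial_x X^{\varepsilon}_{s,t}(x)$ solves, for each fixed $\omega$, the linear random ODE $\frac{d}{dt}J^{\varepsilon}_{s,t}=\partial_y b_\varepsilon(t,X^{\varepsilon}_{s,t})\,J^{\varepsilon}_{s,t}$ with $J^{\varepsilon}_{s,s}=1$, where $b_\varepsilon(t,y):=F_\varepsilon(t,y,(u\ast K)(t,y))$ is the (random, time dependent) drift. This gives the explicit pathwise representation
$$ \big|\partial_x X^{\varepsilon}_{s,t}(x)\big|^{-1}=\exp\Big(-\int_s^t \partial_y b_\varepsilon(r,X^{\varepsilon}_{s,r}(x))\,dr\Big), $$
so the lemma reduces to proving the exponential moment estimate $\E\big[\exp(-\int_s^t \partial_y b_\varepsilon\,dr)\big]\le C$ uniformly in $\varepsilon$ (and in the regularization index $n$).

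First I would compute the integrand by the chain rule: $\partial_y b_\varepsilon(r,y)=(F_1)_\varepsilon(r,y,(u\ast K)(r,y))+(F_3)_\varepsilon(r,y,(u\ast K)(r,y))\,(u\ast K')(r,y)$, where $F_1$ is the spatial derivative and $F_3$ the $u$-derivative of the flux. I would control each piece in $L^\infty_tL^1_x$ uniformly in $\varepsilon$: for the first term, $\sup_z|(F_1)_\varepsilon(r,\cdot,z)|$ has $L^1_x$-norm bounded by $\|F_1\|_{L^\infty_tL^1_xL^\infty_z}$, since mollification does not increase the $L^1$ norm; for the second, $|(F_3)_\varepsilon|\le\|F_3\|_\infty$ while $\|(u\ast K')(r,\cdot)\|_{L^1}\le \|u(r)\|_{L^1}\|K'\|_{L^1}$, finite by the solution class and $K\in C_0^\infty$.

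The exponential moment I would obtain by Girsanov together with Khasminskii's lemma. Since $F\in L^\infty$ the drift $b_\varepsilon$ is bounded, so Novikov's condition holds and Girsanov replaces the law of $X^{\varepsilon}_{s,\cdot}(x)$ by that of the shifted Brownian motion $x+(B_\cdot-B_s)$ at the cost of an exponential density $Z$ with finite second moment; after Cauchy--Schwarz it suffices to bound $\E^{W}\big[\exp(\lambda\int_s^t g(r,x+B_r-B_s)\,dr)\big]$ for $g\in L^\infty_tL^1_x$. The key regularizing estimate is the Gaussian occupation bound $\E^{W}[\int_s^t|g(r,x+B_r-B_s)|\,dr]\le \int_s^t (2\pi(r-s))^{-1/2}\|g(r)\|_{L^1}\,dr\lesssim \|g\|_{L^\infty_tL^1_x}\sqrt{t-s}$; splitting $[s,t]$ into finitely many short subintervals on which this quantity is $<1$ and iterating Khasminskii's lemma yields a finite bound depending only on the stated norms. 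This is the heart of the regularization-by-noise phenomenon: the additive Brownian occupation density absorbs the spatial singularity of the flux, and I expect this uniform-in-$\varepsilon$ exponential integrability for a drift derivative that is merely integrable (not bounded) in space to be the main obstacle.

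Finally, the dependence of $C$ on $F_{3,3}$ enters because the $z$-slot of the drift, $(u\ast K)(r,X^{\varepsilon}_{s,r})$, is itself a continuous $\mathcal F_t$-semimartingale by the very definition of $L^2$-solution. To propagate the previous estimate with this path-dependent argument in a controlled way I would use the Itô--Wentzell--Kunita formula and the Lipschitz-in-$z$ bound on $F_3$ furnished by $F_{3,3}$; the quadratic variation of $(u\ast K)$, which is quadratic in $u\ast K'$, couples with $F_{3,3}$ and is estimated by Cauchy--Schwarz in time, which is exactly why $F_{3,3}$ is required only in $L^2([0,T],L^1(\R,L^\infty(\R)))$. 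Once Khasminskii's lemma is set against the Gaussian occupation bound, the remaining steps are routine Gr\"onwall and Cauchy--Schwarz estimates, with every constant depending only on the norms listed in the statement and not on $\varepsilon$ or $n$.
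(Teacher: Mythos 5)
Your route is genuinely different from the paper's. The paper never invokes Girsanov or Khasminskii: it applies the It\^o--Wentzell--Kunita formula to the primitive $\int_{-\infty}^{X_t}F(t,y,(K\ast u)(y))\,dy$, so that the dangerous quantity $\int_0^t\partial_x\big(F(s,\cdot,(K\ast u))\big)(X_s)\,ds$ appears as the It\^o correction term and can be solved for in terms of (i) quantities controlled pointwise by the $L^\infty_t L^1_x L^\infty_z$ norms of $F, F_1, F_3, F_{3,3}$ (this is where the covariation of $F_3(s,x,(K\ast u))$ with $B$ produces the $F_{3,3}$ term) and (ii) stochastic integrals whose exponentials are handled by Dol\'eans exponential martingales and H\"older. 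Your decomposition $\partial_y b_\varepsilon=(F_1)_\varepsilon+(F_3)_\varepsilon\,(u\ast K')$ followed by Girsanov plus the Gaussian occupation bound and Khasminskii is a legitimate alternative strategy for the $(F_1)_\varepsilon$ piece, and it has the merit of not needing $F_{3,3}$ at all --- your closing paragraph trying to explain where $F_{3,3}$ enters does not correspond to anything your own argument uses and should be dropped.

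There is, however, one concrete gap. After Girsanov the process $X$ becomes a $\QQ$-Brownian motion, but the integrand is a \emph{random} field: both $(F_1)_\varepsilon(r,y,(u\ast K)(r,y))$ and $(u\ast K')(r,y)$ depend on $\omega$ through the solution $u$, which is adapted to the same filtration and whose law is altered by the change of measure. The Khasminskii iteration
$\E^{\QQ}\big[\int_{\sigma}^{t}g(r,x+\tilde B_r-\tilde B_\sigma)\,dr\,\big|\,\mathcal F_\sigma\big]\le\int_\sigma^t(2\pi(r-\sigma))^{-1/2}\|g(r,\cdot)\|_{L^1}\,dr$
requires the heat-kernel factorization of the conditional expectation, which fails when $g(r,\cdot)$ is correlated with the increments of $\tilde B$ after time $\sigma$; a deterministic bound on $\|g(r,\cdot)\|_{L^1}$ alone is not enough. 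For the first piece this is repaired by dominating $|(F_1)_\varepsilon(r,y,z)|\le\big(\sup_z|F_1(r,\cdot,z)|\ast\rho_\varepsilon\big)(y)$, a \emph{deterministic} function in $L^\infty_tL^1_x$ uniformly in $\varepsilon$, to which Khasminskii does apply. For the second piece your proposed $L^1$ bound $\|u\ast K'\|_{L^1}\le\|u\|_{L^1}\|K'\|_{L^1}$ leaves the integrand random, so you cannot feed it to the occupation estimate; instead use the pointwise bound $|(u\ast K')(r,y)|\le\|K'\|_{L^\infty}\|u(r,\cdot)\|_{L^1}\le C$ valid a.s.\ by the solution class $L^\infty(\Omega\times[0,T],L^1(\R))$, which makes $(F_3)_\varepsilon\,(u\ast K')$ bounded by $\|F_3\|_{L^\infty}\|K'\|_{L^\infty}C$ and its exponential deterministically controlled. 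With these two corrections your argument closes and yields the uniform bound with constants depending only on the stated norms (indeed on fewer of them than the paper uses).
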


\begin{remark}The same results is valid for the backward flow $Y_{s,t}$  since it is  solution of the same SDE driven by 
the drifts $-b$.
\end{remark}

%%%%%%%%%%%%%%%%%%%%%%%%%%%%%%%%%%%%% Unicidade %%%%%%%%%%%%%%%%%%%%%%%%%%%%%%

\subsection{Main result.}

In this section, we shall present a uniqueness theorem
for the SPDE (\ref{SCL}).  We pointed that similar  arguments was used 
in previous works \cite{Moli} ,  \cite{Ol} for stochastic  linear continuity equation.

\begin{theorem}\label{uni2}
Under the conditions of hypothesis \ref{hyp1}, uniqueness holds for  $L^{2}$- weak solutions of the Cauchy problem (\ref{SCL}) in the following sense: if $u,v$ are $L^{2}$- weak solutions with the same initial data $u_{0}\in  L^2(\R)\cap L^{1}(\R)$, then  $u= v$ almost everywhere in $ \Omega  \times [0,T] \times \R$.
\end{theorem}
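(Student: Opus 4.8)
The plan is to reduce uniqueness to a Lagrangian representation of each solution along its own characteristic flow, and then to close the resulting nonlinear loop by a Gr\"onwall argument exploiting the smoothing effect of $K$. To a given $L^{2}$-weak solution $u$ I associate the drift $b^{u}(t,x):=F(t,x,(K\ast u)(t,x))$ and the forward flow $X^{u}$ solving $dX_{t}=b^{u}(t,X_{t})\,dt+dB_{t}$, with inverse $Y^{u}$. The central claim is the representation
\[
u(t,x)=u_{0}\big(Y_{t}^{u}(x)\big)\,JY_{t}^{u}(x)\qquad \PP\text{-a.s.},
\]
the very formula that, in the regularized setting, already produced the solutions in the existence proof.

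To establish it I would mollify, setting $u_{\varepsilon}:=u\ast\rho_{\varepsilon}$, and write the It\^o form of the equation solved by $u_{\varepsilon}$; it coincides with the mollified equation up to the commutator $R_{\varepsilon}:=\partial_{x}(u_{\varepsilon}\,b^{u})-\partial_{x}\big((u\,b^{u})\ast\rho_{\varepsilon}\big)$. Applying the It\^o--Wentzell--Kunita formula to the composition $u_{\varepsilon}(t,X_{t}^{u})\,JX_{t}^{u}$, the transport contribution, the $dB_{t}$ term and the It\^o--Stratonovich cross-variation cancel identically because the noise is additive, leaving
\[
d\big(u_{\varepsilon}(t,X_{t}^{u})\,JX_{t}^{u}\big)=R_{\varepsilon}(t,X_{t}^{u})\,JX_{t}^{u}\,dt.
\]
Integrating in time, changing variables along the flow and invoking the uniform negative-moment bound $\mathbb{E}\big[|\partial_{x}X_{s,t}^{u}|^{-1}\big]\le C$ of Lemma \ref{lemma pe} to control the Jacobian factor, one shows the commutator contribution tends to $0$ as $\varepsilon\to0$, which yields the representation. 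This is precisely the stochastic commutator scheme of \cite{Moli,Ol}, transferred here to the nonlocal drift $b^{u}$.

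To conclude, let $u,v$ be two solutions with the same $u_{0}$. By the fundamental theorem of calculus in the third slot of $F$,
\[
b^{u}-b^{v}=\Big(\int_{0}^{1}F_{3}\big(t,x,\theta\,K\ast u+(1-\theta)K\ast v\big)\,d\theta\Big)\,K\ast(u-v),
\]
so the bound $F_{3}\in L^{\infty}$ from (\ref{cond3-4}) together with $K\in C_{0}^{\infty}(\R)$ gives the pointwise estimate $\|b^{u}(s)-b^{v}(s)\|_{L^{\infty}(\R)}\le\|F_{3}\|_{\infty}\,\|K\|_{L^{2}}\,\|u(s)-v(s)\|_{L^{2}(\R)}$. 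Feeding this into the representation and controlling the difference of the two flows and of their Jacobians by the same negative-moment bound of Lemma \ref{lemma pe}, one obtains a closed inequality
\[
\mathbb{E}\,\|u(t)-v(t)\|_{L^{2}(\R)}^{2}\le C\int_{0}^{t}\mathbb{E}\,\|u(s)-v(s)\|_{L^{2}(\R)}^{2}\,ds,
\]
and Gr\"onwall's lemma forces $u=v$ in $\Omega\times[0,T]\times\R$.

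The main obstacle is the commutator convergence in the second step. Hypothesis \ref{hyp1} imposes no regularity whatsoever on $F$ in its spatial variable---only the time derivative $F_{1}$ and the $z$-derivatives $F_{3},F_{3,3}$ are controlled---so $b^{u}$ is merely bounded and possibly discontinuous in $x$, and the classical DiPerna--Lions commutator estimate, which requires $\partial_{x}b^{u}\in L^{1}_{\loc}$, is unavailable. What replaces it is the regularization by noise: the additive Brownian motion turns $X^{u}$ into a flow of diffeomorphisms whose Jacobian obeys the uniform bound of Lemma \ref{lemma pe}, and it is this probabilistic estimate, rather than any spatial differentiability of the flux, that drives $R_{\varepsilon}$ to zero and subsequently closes the Gr\"onwall estimate.
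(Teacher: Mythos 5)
Your overall strategy (characteristics, mollification, commutator, Gr\"onwall) is in the spirit of the paper, but two of your key steps have genuine gaps, and the paper's proof is organized precisely to avoid them. First, the commutator. You work at the level of the density, so your remainder $R_{\varepsilon}=\partial_{x}\big(u_{\varepsilon}b^{u}\big)-\partial_{x}\big((u\,b^{u})\ast\rho_{\varepsilon}\big)$ carries a full spatial derivative of a product of an $L^{2}$ function with a merely bounded, possibly discontinuous coefficient; this converges only distributionally, and the negative-moment bound $\mathbb{E}[|\partial_{x}X_{s,t}|^{-1}]\le C$ of Lemma \ref{lemma pe} does not make it vanish --- that bound only controls the change of variables along the flow. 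The paper's actual mechanism is the one-dimensional primitive trick: setting $V(t,x)=\int_{-\infty}^{x}u(t,y)\,dy$ turns the continuity equation into a transport equation for $V$, whose commutator $\mathcal{R}_{\varepsilon}(V,F)=F_{\varepsilon}\,\partial_{x}V_{\varepsilon}-(F\,\partial_{x}V)_{\varepsilon}=F_{\varepsilon}u_{\varepsilon}-(Fu)_{\varepsilon}$ is zeroth order: a difference of two sequences converging strongly in $L^{2}$ because $F$ is bounded and $u\in L^{2}$. No spatial regularity of $F$ and no probabilistic input is needed for that convergence; Lemma \ref{lemma pe} enters only afterwards, to transport the $L^{2}$ smallness back through the flow. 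As written, your claim that the noise ``drives $R_{\varepsilon}$ to zero'' is not substantiated.

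Second, the Gr\"onwall closure. You propose to compare the two nonlinear flows $X^{u}$ and $X^{v}$ and their Jacobians, and to compose the merely-$L^{2}$ datum $u_{0}$ with both. Since $b^{u}$ and $b^{v}$ are only bounded and measurable in $x$, the difference $X^{u}-X^{v}$ cannot be estimated by $\int_{0}^{t}\|b^{u}-b^{v}\|_{\infty}\,ds$ without a modulus of continuity for the drift, and no stability estimate for the flows or their Jacobians is provided by Lemma \ref{lemma pe}; the step ``controlling the difference of the two flows and of their Jacobians'' is exactly where the argument breaks. The paper sidesteps this entirely: it writes the equation for $u=v-w$ as a \emph{linear} continuity equation with the single drift $F(\cdot,\cdot,K\ast v)$ plus the source $w\,\big(F(\cdot,\cdot,K\ast w)-F(\cdot,\cdot,K\ast v)\big)$, so only one (mollified) flow is ever used; the source is bounded by $C\|w\|_{L^{1}}\int|V|$ using $F_{3}\in L^{\infty}$ and $K\ast(v-w)=K'\ast V$, and Gr\"onwall is closed on $\int|V(t,x)|\eta_{R}(x)\,dx$ rather than on $\mathbb{E}\|u-v\|_{L^{2}}^{2}$. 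Your $L^{\infty}$ bound on $b^{u}-b^{v}$ via $F_{3}$ is the right ingredient, but it must be fed into the linear difference equation, not into a comparison of two flows.
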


\begin{proof}

{\it Step 1:  Two solutions.}  Let $v$  and $w$ are two   $L^{2}$- weak solutions with initial conditions equal to
$u_{0}$. We denoted $u=v-w$. Then 
$u$ verifies

\[
\begin{aligned}
    \int_{\R} u(t,x) \varphi(x) dx = & 
\\[5pt]
    & 	+ \int_{0}^{t} \!\! \int_{\R}   u(s,x)   \, F(s,x,(K\ast v)) \partial_x \varphi(x) \ dx ds\\[5pt]  &
+ \int_{0}^{t} \!\! \int_{\R}   u(s,x) \ \partial_x \varphi(x) \ dx \, {\circ}{dB_s} 	\nonumber\\[5pt]
    & 	+ 
		\int_{0}^{t} \!\! \int_{\R}   w(s,x)   \,  \big(F(s,x,(K\ast w))-F(s,x,(K\ast v)) \big)  \partial_x \varphi(x) \ dx ds
\end{aligned}
\]

{\it Step 2:  Primitive of the solution.} 
We set

\[
V(t,x)=\int_{-\infty}^{x} u(t,y) \ dy.
\]

 We observe that $\partial_x V(t,x)=u(t,x)$  belong to $L^{2}( \Omega\times[0, T]\times \mathbb{R} )$. Now, we consider a  nonnegative smooth cut-off function $\eta$ supported on the ball of radius 2 and such that  $\eta=1$ on the ball of radius 1. For any $R>0$, we introduce the rescaled functions $\eta_R (\cdot) =  \eta(\frac{.}{R})$.

\bigskip

 For all  test functions  $\varphi\in C_0^{\infty}(\R)$ we obtain 
\[
 \int_{\R} V(t,x) \varphi(x) \eta_R (x)  dx = - \int_{\R} u(t,x)   \theta(x) \eta_R (x)  dx
-\int_{\R} V(t,x)   \theta(x) \partial_x \eta_R (x)  dx\,,
\]

\noindent where $\theta(x) =\int_{-\infty}^{x}  \varphi(y)   \ dy$. By  definition  of $L^{2}$-solutions , taking as test function $ \theta(x) \eta_R (x)$ we get 

\begin{align}\label{DISTINTSTRTR}
    \int_{\R} & V(t,x) \ \eta_R (x) \varphi(x) dx =    \nonumber\\[5pt]
    &- \int_{0}^{t} \!\! \int_{\R}   \partial_x V(s,x)   \, F(s,x,(K\ast v)) \eta_R (x) \varphi(x) \ dx ds \nonumber\\[5pt]
    & - \int_{0}^{t} \!\! \int_{\R}   \partial_x V(s,x) \  \eta_R (x) \varphi(x) \ dx \, {\circ}{dB_s} 
		\nonumber\\[5pt]   &
		- 	\int_{0}^{t} \!\! \int_{\R}   \partial_x V(s,x)  F(s,x,(K\ast v))  \,  \partial_x \eta_R (x) \theta(x)  \ dx ds\nonumber\\[5pt]
    &- \int_{0}^{t} \!\! \int_{\R}   \partial_x V(s,x) \  \partial_x \eta_R (x) \theta(x)  \ dx \, {\circ}{dB_s}-\int_{\R} V(t,x)   \theta(x) \partial_x \eta_R (x)  dx \nonumber\\[5pt]  &
		- \int_{0}^{t} \!\! \int_{\R}   w(s,x)   \,  \big(F(s,x,(K\ast w))-F(s,x,(K\ast v)) \big)    \eta_R (x) \varphi(x) \ dx ds 
		\nonumber\\[5pt]  &
		- \int_{0}^{t} \!\! \int_{\R}   w(s,x)   \,  \big(F(s,x,(K\ast w))-F(s,x,(K\ast v)) \big)   \partial_x \eta_R (x) \theta(x) \ dx ds
\end{align}

We observe that

\[
\int_{0}^{t} \!\! \int_{\R}   \partial_x V(s,x) \  \partial_x \eta_R (x) \theta(x)  \ dx \, {\circ}{dB_s}\rightarrow 0,
\]

\[
\int_{\R} V(t,x)   \theta(x) \partial_x \eta_R (x)  dx \rightarrow 0,
\]

\[
\int_{0}^{t} \!\! \int_{\R}   w(s,x)   \,  \big(F(s,x,(K\ast w))-F(s,x,(K\ast v)) \big)   \partial_x \eta_R (x) \theta(x) \ dx ds
\rightarrow 0 ,
\]

\[
\int_{0}^{t} \!\! \int_{\R}   \partial_x V(s,x)  F(s,x,(K\ast v))  \,  \partial_x \eta_R (x) \theta(x)  \ dx ds
\rightarrow 0
\]
as $R\rightarrow \infty$. Passing to the limit  in equation  (\ref{DISTINTSTRTR})  we have that

		\[
		\int_{\R} V(t,x) \varphi(x) dx =  
		\]
		
\[
	- \int_{0}^{t} \!\! \int_{\R}   \partial_x V(s,x)  F(s,x,(K\ast v)) \,   \varphi(x) \ dx ds
	\]
\[
- \int_{0}^{t} \!\! \int_{\R}   \partial_x V(s,x) \   \varphi(x) \ dx \, {\circ}{dB_s} 
\]
\[
- \int_{0}^{t} \!\! \int_{\R}   w(s,x)   \,  \big(F(s,x,(K\ast w))-F(s,x,(K\ast v)) \big)    \varphi(x) \ dx ds 
\]
\bigskip

{\it Step 3: Smoothing.}
Let $\{\rho_{\varepsilon}(x)\}_\varepsilon$ be a family of standard symmetric mollifiers. For any $\varepsilon>0$ and $x\in\R^d$ we use $\rho_\varepsilon(x-\cdot)$ as test function, then we deduce 
$$
\begin{aligned}
      \int_{\R} V(t,y) \rho_\varepsilon(x-y) \, dy  = & \\[5pt]
    &- \int_{0}^{t}  \int_{\R} \big( F(s,y,(K\ast v))  \partial_y V(s,y)  \big)  \rho_\varepsilon(x-y) \ dy ds
		\\[5pt]
    & -  \int_{0}^{t} \!\! \int_{\R} \partial_y V(s,y) \, \rho_\varepsilon(x-y)  \, dy \circ dB_s
		\\[5pt] &
		- \int_{0}^{t} \!\! \int_{\R}   w   \,  \big(F(s,y,(K\ast w))-F(s,y,(K\ast v)) \big)    \rho_\varepsilon(x-y) \ dy ds 
\end{aligned}
$$

We denote   $V_\varepsilon(t,.)= (V\ast_{x} \rho_\varepsilon)(t,.)$, $F_\varepsilon(t,.,(K\ast v)(.)) = (F \ast_{x} \rho_\varepsilon)(t,.)$ and
$(FV)_\varepsilon(t,.)= (F.V\ast_{x} \rho_\varepsilon)(t,.)$. Thus we have

\[
    V_{\varepsilon}(t,x) + \int_{0}^{t} F_{\epsilon}(s,x,(K\ast v))  \partial_x V_{\varepsilon}(s,x) \,  ds   +  \int_{0}^{t}   \partial_{x}  V_{\varepsilon}(s,x) \, \circ dB_s
\]		
	
\[         
=    \int_{0}^{t} \big(\mathcal{R}_{\epsilon}(V,F) \big) (x,s) \,  ds  
\]
\[
-\int_{0}^{t} \!\! \int_{\R}   w(s,x)   \,  \big(F(s,y,(K\ast w))-F(s,y,(K\ast v)) \big)    \rho_\varepsilon(x-y) \ dy ds 
\]

\noindent where we denote
$ \mathcal{R}_{\epsilon}(V,F)  = F_\varepsilon \ \partial_x V_\varepsilon  -  (F\partial_x V)_\varepsilon  $.

\bigskip

{\it Step 4: Method of Characteristics.} Now, we consider the flow

\begin{equation*}
d X_t^{\epsilon} = F_{\epsilon}(t, X_t^{\epsilon},(K\ast v)(t,  X_t^{\epsilon})) \, dt + d B_t \, ,  \hspace{1cm}   X_0 = x \,,
\end{equation*}

Applying the It\^o-Wentzell-Kunita formula   to $ V_{\varepsilon}(t,X_{t}^{\epsilon})$
, see Theorem 8.3 of \cite{Ku2}, we have

\[
   V_{\varepsilon}(t,X_{t}^{\epsilon})  = \int_{0}^{t} \big(\mathcal{R}_{\epsilon}(V,F) \big) (X_s^{\epsilon},s)  ds  
\]

\[         
-\int_{0}^{t} \!\! \int_{\R}   w   \,  \big(F(s,y,(K\ast w))-F(s,y,(K\ast v)) \big)    \rho_\varepsilon(X_s^{\epsilon}-y) \ dy ds 
\]

Then, considering that $X_{t}^{\epsilon}=X_{0,t}^{\epsilon}$ and $Y_{t}^{\epsilon}=Y_{0,t}^{\epsilon}=(X_{0,t}^{\epsilon})^{-1}$ 
we deduce that 

\[
   V_{\varepsilon}(t,x)  =\int_{0}^{t} \big(\mathcal{R}_{\epsilon}(V,F) \big) (Y_{t-s}^{\epsilon},s)  ds  
\]

\[         
-\int_{0}^{t} \!\! \int_{\R}   w   \,  \big(F(s,x,(K\ast w))-F(s,x,(K\ast v)) \big)    \rho_\varepsilon(Y_{t-s}^{\epsilon}-y) \ dy ds 
\]

{\it Step 5: Localization.} Now, we consider a  nonnegative smooth cut-off function $\eta$ supported on the ball of radius 2 and such that  $\eta=1$ on the ball of radius 1. For any $R>0$, we introduce the rescaled functions $\eta_R (\cdot) =  \eta(\frac{.}{R})$. From the last step
we have

\[
  \int_{\R}  |V_{\varepsilon}(t,x)| \eta_R(x) dx  
	\]
	
	\[
	\leq \int_{0}^{t} \ \int_{\R}  |\big(\mathcal{R}_{\epsilon}(V,F) \big) (Y_{t-s}^{\epsilon},s)|  \eta_R(x) dx   ds  
\]

\begin{equation}\label{loca}         
+ \int_{0}^{t} \!\! \int_{\R} \eta_R(x) \int_{\R}  | w|   \,  |F(s,y,(K\ast w))-F(s,y,(K\ast v)) |   \rho_\varepsilon(Y_{t-s}^{\epsilon}-y) \ dy  dx ds 
\end{equation}

\bigskip

{\it Step 6: Convergence of the commutator I.}   Now, we observe that $\mathcal{R}_{\epsilon}(V,F)$ converge to zero in
$L^{2}([0,T]\times \Omega\times \R )$. In fact, we  get that

\[
  (F \ \partial_x V)_{\varepsilon} \rightarrow F \ \partial_x V \ in \ L^{2}([0,T] \times  \Omega \times \R ).
\]

\noindent Moreover,  we have 

\[
F_{\epsilon} \rightarrow F \  \ in \ L^{1}([0,T]\times \Omega, L_{loc}^{1}(\R))
\]

 and

\[
\partial_x V_{\epsilon} \rightarrow   \partial_x V \ in \ L^{2}([0,T]\times \Omega\times \R ).
\]

Then by the dominated convergence theorem we obtain

\[
F_{\epsilon}   \partial_x V_{\varepsilon} \rightarrow F \ \partial_x V \ in \ L^{2}([0,T]\times  \Omega \times \R ).
\]

\bigskip

{\it Step 7: Convergence of the conmutator II.}  We observe that 

\[
	\int_{0}^{t}  \int   |\big(\mathcal{R}_{\epsilon}(V,F) \big) (Y_{t-s}^{\epsilon},s)| \eta_R(x) \ \, dx \   ds
	\]

\[
=	\int_{0}^{t}  \int    |\big(\mathcal{R}_{\epsilon}(V,F) \big) (s,x)| \   JX_{t-s}^{\epsilon}  \eta_R(X_{t-s}^{\epsilon}) \ \, dx \   ds .
\]

 By   H\"older's inequality we obtain 

\[
	\E \bigg|\int_{0}^{t}  \int \bigg(\mathcal{R}_{\epsilon}(V,F) \bigg) (x,s) \   JX_{s,t}^{\epsilon}  \eta_R(X_{t-s}^{\epsilon}) \ \, dx \   ds \bigg|
	\]
	
	\[
	\leq   	 \bigg(\E \int_{0}^{t}  \int |\big(\mathcal{R}_{\epsilon}(V,F) \big) (x,s)|^{2} \  \, dx \   ds \bigg)^{\frac{1}{2}}
  \bigg(\E \int_{0}^{t}  \int  | JX_{s,t}^{\epsilon} \eta_R(X_{t-s}^{\epsilon})|^{2} \ \, dx \ ds \bigg)^{\frac{1}{2}}
\]

\noindent  From step 6 we follow

\[
 \bigg(\E \int_{0}^{t}  \int |\big(\mathcal{R}_{\epsilon}(V,F) \big) (x,s)|^{2} \  \, dx \   ds \bigg)^{\frac{1}{2}}\rightarrow 0.
\]

From lemma \ref{lemma pe} we deduce 

\[
\bigg(\E \int_{0}^{t}  \int  | JX_{s,t}^{\epsilon} \eta_R(X_{t-s}^{\epsilon})|^{2} \ \, dx \ ds \bigg)^{\frac{1}{2}}
=\bigg(\E \int_{0}^{t}  \int  | JY_{s,t}^{\epsilon}|^{-1} |\eta_R(x)|^{2} \ \, dx \ ds \bigg)^{\frac{1}{2}}
\]
\[
\leq C  \bigg( \int |\eta_R(x)|^{2} \ \, dx \  \bigg)^{\frac{1}{2}},
\]

{\it Step 8: Conclusion  .}  From step 5  we have

\[
  \int_{\R}  |V_{\varepsilon}(t,x)| \eta_R(x) dx  
	\]
	
	\[
	\leq \int_{0}^{t} \ \int_{\R}  |\big(\mathcal{R}_{\epsilon}(V,F) \big) (Y_{t-s}^{\epsilon},s)|  \eta_R(x) dx   ds  
\]

\[
+ \int_{0}^{t} \!\! \int_{\R} \eta_R(x)  \int_{\R}  | w(s,x)|   \,  |F(s,y,(K\ast w))-F(s,y,(K\ast v)) |   \rho_\varepsilon(Y_{t-s}^{\epsilon}-y) \ dy dx  ds
ds
\] 

Now we observe that 

\[
 \int_{0}^{t} \!\! \int_{\R} \eta_R(x)  \int_{\R}  | w(s,x)|   \,  |F(s,x,(K\ast w))-F(s,x,(K\ast v)) |   \rho_\varepsilon(Y_{t-s}^{\epsilon}-y) \ dy dxds
ds
\]

\[
\leq  C \| w \|_{L^{1}(\R)} \int_{0}^{t} \!\!   \int_{\tilde{K}} |V(s,z)| dz  ds
\] 

\noindent where $\tilde{K}$ is one compact set. 
If we take $R$ sufficiently large we have

\[
  \int_{\R}  |V_{\varepsilon}(t,x)| \eta_R(x) dx  
	\]
	
	\[
	\leq \int_{0}^{t} \ \int_{\R}  |\big(\mathcal{R}_{\epsilon}(V,F) \big) (Y_{t-s}^{\epsilon},s)|  \eta_R(x) dx   ds  
\]

\[
+ \  C \| w \|_{L^{1}(\R)} \int_{0}^{t} \!\!   \int \eta_R(x)  |V(s,z)| dz  ds
\]

Taking the limit as $\epsilon$ converge to zero we deduce 

\[
  \int_{\R}  |V(t,x)| \eta_R(x) dx  
	\]

\[
\leq  C \| w \|_{L^{1}(\R)} \int_{0}^{t} \!\!   \int \eta_R(x)  |V(s,z)| dz  ds
\]

By Gronwall lemma we deduce   that $V=0$. Then we have  $u=0$.

\end{proof}

\section{Appendix}

\begin{lemma}\label{lemma pe2}
Assume the hypothesis \ref{hyp1}.  Then for  $T>0$  there exist a  constant 
$C$  such that

\begin{align}\label{eq0}
\mathbb{E}\bigg[\bigg|\partial_x X_{s,t}^{n}(x)\bigg|^{-1}\bigg] \leq C,
\end{align}
where C depend on  $\| F\|_{L^{\infty}([0,T], L^{1}(\R, L^{\infty}(\R)))}$, $\| F\|_{L^{\infty}( [0,T]\times \R  \times  \R)}$,  
$\| F_1\|_{L^{\infty}([0,T], L^{1}(\R, L^{\infty}(\R)))}$, $\| F_3\|_{L^{\infty}( [0,T]\times \R  \times  \R)}$ and 
$\| F_{3,3}\|_{L^{2}([0,T], L^{1}(\R, L^{\infty}(\R)))}$.

\end{lemma}

\begin{proof}
For simplicity we assume $s=0$. 

{\it Step 1: Regularization.}

We consider the SDE :
\begin{equation*}
d X_t^{{n+1}} = F^{n+1} (t,X_t,(u^{n}\ast K)(t,X_t^{n+1})) \, dt + d B_t \, ,  \hspace{1cm}   X_0 = x \,.
\end{equation*}

We observe that  $\partial_x X_{t}^{n+1}$ verifies 

\[
\partial_x X_{t}^{{n+1}}=\exp\bigg\{ \int_{0}^{t}  \partial_{x} \big( F^{n+1}(s,x,(u^{n}\ast K))\big)(s,X_t^{n+1}) \  ds  \bigg\}. 
\]

{\it Step 2: Semimartingale representation.} By definition of solution we get  that  $(K\ast u^{n})(t,x)$ satisfies

\[
    \int_{\R} u^{n}(t,y) K(x-y) dy =  \int_{\R} u_{0}(y) K(x-y) \ dy
\]	
\[
	+ \int_{0}^{t} \!\! \int_{\R}   u^{n}(s,y)   \, F^{n+1}(s,y,(u^{n-1}\ast K)) \partial_y K(x-y) \ dy ds
+ \int_{0}^{t} \!\! \int_{\R}   u^{n}(s,y) \ \partial_y K(x-y) \ dy \, dB_s \,  
\]
\[
+ \frac{1}{2} \int_{0}^{t} \!\! \int_{\R}   u^{n}(s,y) \ \partial_y^{2} K(x-y) \ dy \, ds.
\]

Then by It\^o formula we have

\[
    F^{n+1}(t,x,(K\ast u^{n})) =   F^{n+1}(0,x,(K\ast u_{0})) +  \int_{0}^{t} \!\!     \,  F_1^{n+1}(s,x,(u^{n}\ast K)) \ ds
\]	
\[
	+ \int_{0}^{t} \!\!    F_3^{n+1}(s,x,(K\ast u^{n}))   \int_{\R}   u^{n}(s,y)   \, F^{n+1}(s,y,(K\ast u^{n-1})) \partial_y K(x-y) \ dy ds
	\]
	
\begin{equation}\label{itoF2}
+ \int_{0}^{t} \!\!   F_{3}^{n+1}(s,x,(K\ast u^{n}))    \int_{\R}   u^{n}(s,y) \ \partial_y K(x-y) \ dy \,  {\circ}{dB_s} \,  
\end{equation}

In order to write the last equality in  It\^o formulation we have to calculate the covariation

\[
\big[   F_3^{n+1}(s,x,(K\ast u^{n})(x))    \int_{\R}   u^{n}(s,x) \ \partial_y K(x-y) \ dy, B_s \big].
\]

Now, we obtain

\[
    F_3^{n+1}(t,x,(K\ast u^{n})) =   F_3^{n+1}(0,x,(K\ast u_{0})) +  \int_{0}^{t} \!\!     \, F_{3,1}^{n+1}(s,x,(K\ast u^{n})) \ ds
\]	
\[
	+ \int_{0}^{t} \!\!    F_{3,3}^{n+1}(s,x,(K\ast u^{n}))   \int_{\R}   u(s,y)   \, F^{n+1}(s,y,(K\ast u^{n-1})) \partial_y K(x-y) \ dy ds
	\]
	
\[
+ \int_{0}^{t} \!\!   F_{3,3}^{n+1}(s,x,(K\ast u^{n}))    \int_{\R}   u^{n}(s,y) \ \partial_y K(x-y) \ dy \,  {\circ}{dB_s} \,  
\]

and

\[
    \int_{\R} u^{n}(t,x) \partial_y K(x-y) dy = \int_{\R} u_{0}(y)  \partial_y K(x-y) \ dy
\]	
\[
	+ \int_{0}^{t} \!\! \int_{\R}   u^{n}(s,y)   \, F^{n+1}(s,y,(K\ast u^{n-1})) \partial_{y,y} K(x-y) \ dy ds
\]	
\[	
+ \int_{0}^{t} \!\! \int_{\R}   u^{n}(s,y) \ \partial_{y,y} K(x-y) \ dy \, {\circ}{dB_s}  \,  
\]

We set  $ u^{n,k}(t,x):= (K \ast u^{n})(t,x)$. From the  It\^o formula for the product of two semimartingales we obtain

\[
    F_3^{n+1}(t,x,(K\ast u^{n}))   \partial_x u^{n,k}(t,x) =   F_3^{n+1}(0,x,(K\ast u_{0}))  \partial_x u_0^{k}(t,x) 
\]
\[			
		+ \int_{0}^{t} \!\!    \partial_x u^{n,k}(s,x)  \,  F_{3,1}^{n+1}(s,x,(K\ast u^{n})) \ ds
\]	
\[
	+ \int_{0}^{t}  \partial_x u^{n,k}(s,x)    F_{3,3}^{n+1}(s,x,(K\ast u^{n}))   
	\int_{\R}   u(s,y)   \, F^{n+1}(s,y,(K\ast u^{n-1})) \partial_y K(x-y) \ dy ds
	\]
	
\[
+ \int_{0}^{t} \!\!  \partial_x u^{n,k}(s,x) F_{3,3}^{n+1}(s,x,(K\ast u^{n}))   \partial_x u^{n,k}(s,x) \,  {\circ}{dB_s} \,  
\]

\[
	+ \int_{0}^{t} \!\!    F_3^{n+1}(s,x,(K\ast u^{n}))  \int_{\R}   u^{n}(s,y)   \, F^{n+1}(s,y,(K\ast u^{n-1})) \partial_{y}^{2} K(x-y) \ dy ds
\]	
\[
+ \int_{0}^{t} \!\!    F_3^{n+1}(s,x,(K\ast u^{n}))    \partial_{x}^{2} u^{n,k}(s,x)  \, {\circ}{dB_s}  \,  
\]

Applying covariation in the last equality we deduce

\[
\big[   F_{3}^{n+1}(s,x,(K\ast u^{n}))    \int_{\R}   u^{n}(s,x) \ \partial_y K(x-y) \ dy, B_s \big]
\]

\[
 =\int_{0}^{t} \!\!  \partial_x u^{n,k}(s,x)  F_{3,3}^{n+1}(s,x,(K\ast u^{n}))   \partial_x u^{k}(s,x) \,  ds \,  
\]

\begin{equation}\label{cova2}
 + \int_{0}^{t}      F_3^{n+1}(s,x,(K\ast u^{n}))    \partial_{x,x} u^{k}(s,x) \ ds.      
\end{equation}

From formulas (\ref{itoF2}) and (\ref{cova2}) we obtain 

\[
    F^{n+1}(t,x,(K\ast u^{n})) =   F^{n+1}(0,x,(K\ast u_{0})) +  \int_{0}^{t} \!\!     \,  F_1^{n+1}(s,x,(K\ast u^{n})) \ ds
\]	
\[
	+ \int_{0}^{t} \!\!    F_{3}^{n+1}(s,x,(K\ast u^{n}))   \int_{\R}   u^{n}(s,y)   \, F^{n+1}(s,y,(K\ast u^{n-1})) \partial_y K(x-y) \ dy ds
	\]
\[	
 \int_{0}^{t} \!\!   F_{3}^{n+1}(s,x,(K\ast u^{n}))    \int_{\R}   u^{n}(s,y) \ \partial_y K(x-y) \ dy \,  dB_s \,  
\]

\[
 +\int_{0}^{t} \!\!  \partial_x u^{n,k}(s,x)  F_{3,3}^{n+1}(s,x,(K\ast u^{n}))   \partial_x u^{n,k}(s,x) \,  ds \,  
\]

\[
 + \int_{0}^{t}      F_3^{n+1}(s,x,(K\ast u^{n}))    \partial_{x}^{2} u^{n,k}(s,x) \ ds.      
\]

Integrating the last equality  we get

\[
     \int_{-\infty}^{z} F^{n+1}(t,x,(K\ast u^{n}))  dx =   \int_{-\infty}^{z} F^{n+1}(0,x,(K\ast u_{0}) dx 
\]		
	
		\[+  
		\int_{0}^{t}     \, \int_{-\infty}^{z}  F_{1}^{n+1}(s,x,(K\ast u^{n})) \ dx ds
\]	
\[
	+ \int_{0}^{t} \!\!  \int_{-\infty}^{z}   F_{3}^{n+1}(s,x,(K\ast u^{n}))   \int_{\R}   u^{n}(s,y)   \, F^{n+1}(s,y,(K\ast u^{n-1})) \partial_y K(x-y) \ dy dx  ds
	\]
\[	
 +\int_{0}^{t} \!\! \int_{-\infty}^{z}    F_{3}^{n+1}(s,x,(K\ast u^{n}))    \int_{\R}   u^{n}(s,y) \ \partial_y K(x-y) \ dy  dx \,  dB_s \,  
\]

\[
 +\int_{0}^{t} \!\! \int_{-\infty}^{z}  \partial_x u^{n,k}(s,x) \partial_{3,3} F^{n+1}(s,x,(K\ast u^{n}))   \partial_x u^{n,k}(s,x) \, dx  ds \,  
\]

\[
 + \int_{0}^{t}    \int_{-\infty}^{z}   F_3^{n+1}(s,x,(K\ast u^{n}))    \partial_{x}^{2} u^{n,k}(s,x) \ dx ds.      
\]

{\it Step 3: It\^o-Wentzell-Kunita formula.}  Applying the It\^o-Wentzell-Kunita formula   to $ \int_{-\infty}^{X_{t}^{n+1}} F(t,x,(K\ast u^{n})(x))  dx$ , see Theorem 8.3 of \cite{Ku2}, we deduce

\[
     \int_{-\infty}^{X_{t}^{n+1}} F^{n+1}(t,x,(K\ast u^{n}))  dx =   \int_{-\infty}^{x} F^{n+1}(0,x,(K\ast u_{0})) dz 
\]		
	\[	
	+	\int_{0}^{t} \!\!     \, \int_{-\infty}^{X_{s}^{n+1}}  F_{1}^{n+1}(s,x,(K\ast u^{n})) \ dx ds
\]	
\[
	+ \int_{0}^{t} \!\!  \int_{-\infty}^{X_{s}^{n+1}}   F_{3}^{n+1}(s,x,(K\ast u^{n}))   \int_{\R}   u^{n}(s,y)   \, F^{n+1}(s,y,(K\ast u^{n-1})) \partial_y K(x-y) \ dy dx  ds
	\]
\[	
 \int_{0}^{t}    \int_{-\infty}^{X_{s}^{n+1}}    F_{3}^{n+1}(s,x,(K\ast u^{n}))    \int_{\R}   u^{n}(s,y) \ \partial_y K(x-y) \ dy  dx \,      \,  dB_s \,  
\]

\[
 +\int_{0}^{t} \!\! \int_{-\infty}^{X_{s}^{n+1}}  \partial_x u^{n,k}(s,x)  F_{3,3}^{n+1}(s,x,(K\ast u^{n}))   \partial_x u^{n,k}(s,x) \, dx  ds \,  
\]

\[
 + \int_{0}^{t}    \int_{-\infty}^{X_{s}^{n+1}}   F_3^{n+1}(s,x,(K\ast u^{n}))    \partial_{x,x} u^{n,k}(s,x) \ dx ds
\]
\[
+ \int_{0}^{t}  F^{n+1}(s,X_{s}^{n+1},(K\ast u^{n})(s,X_{s}^{n+1})) F^{n+1}(s,X_{s}^{n+1},(K\ast u^{n})(s,X_{s}^{n+1}))     ds 
\]
\[
+  \int_{0}^{t}  F^{n+1}(s,X_{s}^{n+1},(K\ast u^{n})(s,X_{s}^{n+1}))        dB_s  
\]

\[
+ \frac{1}{2}  \int_{0}^{t}  \partial_{x}\big( F^{n+1}(s,x,(K\ast u^{n} ))\big)(s,X_s^{n+1})    ds  
\]
\[
+ \int_{0}^{t}    F_3^{n+1}(s,X_{s}^{n+1},(K\ast u^{n}))   \partial_{x}u^{n,k}(s,X_{s}^{n+1})      ds
\]

{\it Step 4: Boundedness.}

We have that

\[
\|  \int_{-\infty}^{X_{t}^{n+1}} F^{n+1}(t,x,(K\ast u^{n}))  dx\|_{L^{\infty}(\Omega\times [0,T]\times \R)} \leq \|F\|_{L^{\infty}([0,T],L^{1}( \R,L^\infty(\R)))} ,
\]

\[
\| \int_{-\infty}^{x} F^{n+1}(0,x,(K\ast u_{0})) dx  \|_{L^{\infty}(\Omega\times [0,T]\times \R)}\leq C   \|F\|_{L^{\infty}([0,T],L^{1}( \R,L^\infty(\R)))},
\]

\[
\|  \int_{0}^{t} \!\!     \, \int_{-\infty}^{X_{s}^{n+1}}  F_1^{n+1}(s,x,(K\ast u^{n})) \ dx ds \|_{L^{\infty}(\Omega\times [0,T]\times \R)} 
\leq C   \|F_1\|_{L^{\infty}([0,T],L^{1}( \R,L^\infty(\R)))},
\]

\[
\|    \int_{0}^{t} \!\!  \int_{-\infty}^{X_{s}^{n+1}}  \ F_{3}^{n+1}(s,x,(K\ast u^{n}))   \int_{\R}   u^{n}(s,y)   \, F^{n+1}(s,y,(K\ast u^{n-1})) \partial_y K(x-y) \ dy dx  ds      \|_{L^{\infty}(\Omega\times [0,T]\times \R)}
\]

\[
\leq C  \|  F  \|_{L^{\infty}([0,T]\times \R \times\R)}    \|  u^{n} \|_{L^{\infty}([0,T]\times \Omega,L^{1}(\R))}    
\|F_3\|_{L^{\infty}([0,T],L^{1}( \R,L^\infty(\R)))},
\]
\[
\leq C  \|  F  \|_{L^{\infty}([0,T]\times \R \times\R))}    \|F_2\|_{L^{\infty}([0,T],L^{1}( \R,L^\infty(\R)))},
\]
\[
\|   \int_{0}^{t} \!\! |\int_{-\infty}^{X_{s}^{n+1}}   F_3^{n+1}(s,x,(K\ast u^{n})(x))    \int_{\R}   u^{n}(s,y) \ \partial_y K(x-y) \ dy  dx \, |^{2} ds  \|_{L^{\infty}(\Omega\times [0,T]\times \R)} \leq C
\]

\[  
\leq C  \|  u^{n} \|_{L^{\infty}([0,T]\times \Omega,L^{1}(\R))}^{2}    \|F_3\|_{L^{2}([0,T],L^{1}( \R,L^\infty(\R)))},
\]

\[  
\leq C     \|F_3\|_{L^{2}([0,T],L^{1}( \R,L^\infty(\R)))},
\]

\[
\| \int_{0}^{t} \!\! \int_{-\infty}^{X_{s}^{n+1}}  \partial_x u^{n,k}(s,x) F_{3,3}^{n+1}(s,x,(K\ast u^{n}))   \partial_x u^{n,k}(s,x) \, dx  ds   \|_{L^{\infty}(\Omega\times [0,T]\times \R)}
\]
\[
\leq C  \|  u^{n} \|_{L^{\infty}([0,T]\times \Omega,L^{1}(\R))}^{2}   \|F_{3,3}\|_{L^{1}( [0,T]\times \R,L^\infty(\R)))},
\]
\[
\leq C    \|F_{3,3}\|_{L^{1}( [0,T]\times \R,L^\infty(\R)))},
\]

\[
\| \int_{0}^{t}    \int_{-\infty}^{X_{s}^{n+1}}   F_3^{n+1}(s,x,(K\ast u^{n}))    \partial_{x,x} u^{n,k}(s,x) \ dx ds  \|_{L^{\infty}(\Omega\times [0,T]\times \R)}
\]
\[
\leq C  \|  u^{n} \|_{L^{\infty}([0,T]\times \Omega,L^{1}(\R))}^{2}    \|F_3\|_{L^{1}([0,T]\times \R, L^{\infty}(\R) )} ,
\]

\[
\leq C    \|F_3\|_{L^{1}([0,T]\times \R, L^{\infty}(\R) )} ,
\]

\[
\|  \int_{0}^{t}  F^{n+1}(s,X_{s}^{n+1},(K\ast u^{n})(s,X_{s}^{n+1}))  F^{n+1}(s,X_{s},(K\ast u^{n})(s,X_{s}^{n+1}))    ds  \|_{\infty} \leq   \|F\|_{L^{\infty}( [0,T]\times \R \times \R)}^{2} ,
\]

\[
\|  \int_{0}^{t}   F_3^{n+1}(s,X_{s}^{n+1},(K\ast u^{n})(s,X_{s}^{n+1}))   \partial_{x}u^{k}(s,X_{s}^{n+1})      ds \|_{L^{\infty}(\Omega\times [0,T]\times \R)}
\]
\[
 \leq   \|  u^{n} \|_{L^{\infty}([0,T]\times \Omega,L^{1}(\R))} \| F_3\|_{L^{\infty}( [0,T]\times \R \times \R)}.
\]

\[
 \leq   \| F_3\|_{L^{\infty}( [0,T]\times \R \times \R)}.
\]

{\it Step 5: Conclusion.}

From step 3 we have

\[
-\int_{0}^{t}  \partial_{x}\big( F^{n+1}(s,x,(K\ast u^{n} ))\big)(s,X_s^{n+1})    ds
\]

\[
=  2 \int_{0}^{t}  F^{n+1}(s,X_{s}^{n+1},(K\ast u^{n})(s,X_{s}^{n+1}))  dB_s  
\]

\[
- 4 \int_s^{t}   |F^{n+1}(s,X_{s},(K\ast u^{n}))|^{2}(s,X_{s})ds 
\]

\[
+ 2 \int_{0}^{t}    \int_{-\infty}^{X_{s}^{n+1}}    F_{3}^{n+1}(s,x,(K\ast u^{n}))    \int_{\R}   u^{n}(s,y) \ \partial_y K(x-y) \ dy  dx \,      \,  dB_s \,  
\]
\[
- 4 \int_{0}^{t}   | \int_{-\infty}^{X_{s}^{n+1}}    F_{3}^{n+1}(s,x,(K\ast u^{n}))    \int_{\R}   u^{n}(s,y) \ \partial_y K(x-y) \ dy  dx \, |^{2}     \,  ds \,  
\]

\begin{equation}\label{EI}
+ I_{3}= I_{1}+I_{2} + I_{3}
\end{equation}

where 

\[
I_{1}=  2 \int_{0}^{t}  F^{n+1}(s,X_{s}^{n+1},(K\ast u^{n})(s,X_{s}^{n+1}))  dB_s  
\]

\[
- 4 \int_s^{t}   |F^{n+1}(s,X_{s},(K\ast u^{n}))|^{2}(s,X_{s})ds,
\]

and 

\[
I_{2} = 2 \int_{0}^{t}    \int_{-\infty}^{X_{s}^{n+1}}    F_{3}^{n+1}(s,x,(K\ast u^{n}))    \int_{\R}   u^{n}(s,y) \ \partial_y K(x-y) \ dy  dx \,      \,  dB_s \,  
\]
\[
- 4 \int_{0}^{t}   | \int_{-\infty}^{X_{s}^{n+1}}    F_{3}^{n+1}(s,x,(K\ast u^{n}))    \int_{\R}   u^{n}(s,y) \ \partial_y K(x-y) \ dy  dx \, |^{2}     \,  ds \,  
\]

amd 

\[
    I_{3}(t,x) =- 2 \int_{-\infty}^{X_{t}^{n+1}} F^{n+1}(t,x,(K\ast u^{n}))  dx + 2  \int_{-\infty}^{x} F^{n+1}(0,x,(K\ast u_{0})) dz 
\]

	\[	
	+	2 \int_{0}^{t} \!\!     \, \int_{-\infty}^{X_{s}^{n+1}}  F_{1}^{n+1}(s,x,(K\ast u^{n})) \ dx ds
\]	

\[
	+ 2 \int_{0}^{t} \!\!  \int_{-\infty}^{X_{s}^{n+1}}   F_{3}^{n+1}(s,x,(K\ast u^{n}))   \int_{\R}   u^{n}(s,y)   \, F^{n+1}(s,y,(K\ast u^{n-1})) \partial_y K(x-y) \ dy dx  ds
	\]

\[
 + 2 \int_{0}^{t} \!\! \int_{-\infty}^{X_{s}^{n+1}}  \partial_x u^{n,k}(s,x)  F_{3,3}^{n+1}(s,x,(K\ast u^{n}))   \partial_x u^{n,k}(s,x) \, dx  ds \,  
\]

\[
 + 2 \int_{0}^{t}    \int_{-\infty}^{X_{s}^{n+1}}   F_3^{n+1}(s,x,(K\ast u^{n}))    \partial_{x,x} u^{n,k}(s,x) \ dx ds
\]

\[
+  6 \int_{0}^{t} | F^{n+1}(s,X_{s}^{n+1},(K\ast u^{n})(s,X_{s}^{n+1}))|^{2}     ds 
\]

\[
+ 2 \int_{0}^{t}    F_3^{n+1}(s,X_{s}^{n+1},(K\ast u^{n}))   \partial_{x}u^{n,k}(s,X_{s}^{n+1})      ds
\]

\[
+ 4 \int_{0}^{t}   | \int_{-\infty}^{X_{s}^{n+1}}    F_{3}^{n+1}(s,x,(K\ast u^{n}))    \int_{\R}   u^{n}(s,y) \ \partial_y K(x-y) \ dy  dx \, |^{2}     \,  ds .  
\]

\noindent We set
\[
\mathcal{E}\bigg(\int_0^t 4 F^{n+1}(s,X_{s},(K\ast u^{n}))(s,X_{s})dB_s\bigg)
\]
\[
=\exp\bigg\{\int_0^t   4 F^{n+1}(s,X_{s},(K\ast u^{n}))(s,X_{s}) dB_s-8 \int_s^{t}   |F^{n+1}(s,X_{s},(K\ast u^{n}))|^{2}(s,X_{s})ds \bigg\}, 
\]
and

\[
\mathcal{E}\bigg(\int_0^t   \int_{-\infty}^{X_{s}}   4 F_3^{n+1}(s,x,(K\ast u^{n}))    \int_{\R}   u^{n}(s,y) \ \partial_y K(x-y) \ dy  dx   dB_s\bigg)
\]
\[
=\exp\bigg\{\int_0^t    \int_{-\infty}^{X_{s}}   4 F_3^{n+1}(s,x,(K\ast u^{n})    \int_{\R}   u^{n}(s,y) \ \partial_y K(x-y) \ dy  dx    dB_s
\]
\[
-8 \int_s^{t}   |\int_{-\infty}^{X_{s}}    F_3^{n+1}(s,x,(K\ast u^{n}))    \int_{\R}   u^{n}(s,y) \ \partial_y K(x-y) \ dy  dx|^{2} ds \bigg\}. 
\]

 From (\ref{EI}) and   inequalities in step 4  we obtain

\[
\E\bigg[\bigg|\frac{d X_{t}}{dx}(x)\bigg|^{-1}\bigg ] \leq C \ \E\bigg[ \exp(I_{1}+I_{2})    \bigg ]
\]

Finally by  H\"older inequality we deduce 

\begin{align}
\E\bigg[\bigg|\frac{d X_{t}}{dx}(x)\bigg|^{-1}\bigg ] & 
\leq 
\nonumber\\&   \leq C \mathcal{E}\bigg(\int_0^t 4 F^{n+1}(s,X_{s},(K\ast u^{n})(s,X_{s})))dB_s\bigg)^{1/2} 
\nonumber\\ & \times
\mathcal{E}\bigg(\int_0^t   \int_{-\infty}^{X_{s}} 4  F_3^{n+1}(s,x,(K\ast u^{n}))    \int_{\R}   u^{n}(s,y) \ \partial_y K(x-y) \ dy  dx   dB_s\bigg)^{1/2}
\end{align}

Finally we observe  that the  processes  

\[
\mathcal{E}\bigg(\int_0^t  4F^{n+1}(s,X_{s},(K\ast u^{n+1}))(s,X_{s})))dB_s\bigg)
\]

and

\[
\mathcal{E}\bigg(\int_0^t   \int_{-\infty}^{X_{s}}  4   F_{3}^{n+1}(s,x,(K\ast u^{n+1}))    \int_{\R}   u^{n+1}(s,y) \ \partial_y K(x-y) \ dy  dx   dB_s\bigg)
\]

 are martingales with expectation equal to one.  From this  we conclude our lemma.
\end{proof}

\section{Final remarks. }

\begin{remark}(Linear case) Now suppose that $F(t,x,z)=b(x)$.  If $b\in L^{1}\cap L^{\infty}$ then
it satisfies the hypothesis 1. Thus, we have the uniqueness for the  stochastic continuity equation 
with irregular drift. Then our result is the  nonlinear extension of the theory of regularization   
by noise for transport/continuity equation initiated by Flandoli, Gubinelli and Priola
in \cite{FGP2}. 

We pointed  of according  to the theory of Diperna-Lions (see \cite{DL}) the uniqueness  of 
 deterministic transport/continuity equation holds when $b$ has $W^{1,1}$ spatial regularity together with a condition of boundedness on the divergence. The theory has been generalized by L. Ambrosio \cite{ambrisio} to the case of only $BV$ regularity for b instead of $W^{1,1}$.
 We refer the readers to two excellent summaries in \cite{ambrisio2} and \cite{lellis}.
\end{remark}

\begin{remark}
The main our  tool in order to have estimations on the derivative of the flow was the It\^o-Wentzell-Kunita formula. However, 
it is possible  only to apply this  formula for compositions of semimartingales. In order to generalized our result for more general $F$ we have in mind to work in the context of the theory of  stochastic calculus  via regularization. This calculus  was introduced by 
  by F. Russo and P. Vallois ( see \cite{RVSem} as general reference ) and it have been studied and developed by
many authors.  In the papes of F.Flandoli and F. Russo  and R. Coviello and  F.Russo 
they obtain a  It\^o-Wentzell-Kunita formula for more general process, see \cite{Flan} and \cite{Covie}. 
We also mention the recent extension of the It\^o-Wentzell-Kunita formula by R. Duboscq  and  A. Reveillac 
in \cite{Dobu}. 
\end{remark}
 
\begin{remark}  We pointed that 
multiplicative noise as the one used in the  stochastic conservation law (\ref{SCL}) is not enough to improve 
the regularity of solutions of the following stochastic Burgers equation
$$
    \partial_t u(t, x,) +  \partial_x u(t, x)  \big( u(t,x) + \frac{d B_{t}}{dt} \big) = 0 \, .
$$
Indeed, for this equation one can observe the appearance of shocks in finite time, just as for the deterministic Burgers equation. 
We address the reader to  \cite{Flanlect} for a more detailed discussion of this topic.
\end{remark}

\begin{remark} Finally we point after the writing of this paper appeared in  arxiv the paper  of Gess and Maurelli \cite{Gess}  where  the authors consider stochastic scalar conservation laws with spatially inhomogeneous
flux. Assuming  low regularity of the flux function with respect to its
 spatial variable  they proved uniqueness of  stochastic kinetic  entropy solutions 
when are not necessarily uniqueness in the  corresponding deterministic scalar conservation law.
Our result is in one-dimension but we prove uniqueness in the class of weak solutions 
and we assume very low regularity on the flux functions (in spatial variable). 
\end{remark}

%%%%%%%%%%%%%%%%%%%%%%%%
\section*{Acknowledgements}
%%%%%%%%%%%%%%%%%%%%%%%%
    Christian Olivera  is partially supported by FAPESP by the grants 2017/17670-0 and 2015/07278-0 and by CNPq
		by the grant 426747/2018-6.

%%%%%%%%%%%%%%%%%

\end{document}